
\documentclass[letterpaper, 10 pt, conference]{ieeeconf}  

\IEEEoverridecommandlockouts                              
\overrideIEEEmargins

\usepackage{lipsum}
\usepackage{amsfonts}
\usepackage{epstopdf}
\usepackage{amssymb}  
\usepackage{amsmath}
\usepackage{float}
\usepackage{algorithm}
\usepackage{dsfont}

\usepackage{tikz}
\usetikzlibrary{patterns}
\usetikzlibrary{arrows}
\tikzset{cross/.style={path picture={
  \draw[black]
    (path picture bounding box.south east)--(path picture bounding box.north west)
    (path picture bounding box.south west)--(path picture bounding box.north east);
}}}

\newcommand{\mb}[1]{\mathbb{#1}}
\newcommand{\mc}[1]{\mathcal{#1}}

\newtheorem{definition}{\textsc{Definition}}
\newtheorem{cond}{\textsc{Condition}}
\newtheorem{theorem}{\textsc{Theorem}}[section]

\newtheorem{proposition}{\textsc{Proposition}}
\newtheorem{remark}{\textsc{Remark}}[section]


\title{\LARGE \bf
Minimal time problem for discrete crowd models with a  localized vector field
}


%
%
%

\author{Michel Duprez \and Morgan Morancey \and Francesco Rossi
\thanks{This work has been carried out in the framework of Archim\`ede Labex (ANR-11-LABX-0033) and of the A*MIDEX project (ANR-11-IDEX-0001-02), funded by the ``Investissements d'Avenir" French Government programme managed by the French National Research Agency (ANR). 
}
\thanks{M.~Duprez and M.~Morancey are with Aix Marseille Universit\'e, CNRS, Centrale Marseille, I2M,  Marseille, France
		{\tt\small mduprez@math.cnrs.fr} {\tt\small morgan.morancey@univ-amu.fr}}
\thanks{F.~Rossi is with Dipartimento di Matematica ``Tullio Levi-Civita", Universit\`{a} degli Studi di Padova, Via Trieste 63, 35121 Padova, Italy
		{\tt \small francesco.rossi@math.unipd.it}}
}

\begin{document}

\maketitle
\thispagestyle{empty}
\pagestyle{empty}

\begin{abstract}
In this work, we study the  minimal time to steer a given crowd to a desired
configuration. The control is a vector field, 
representing a perturbation of the crowd velocity, 
localized on a fixed control set. 

\noindent We characterize the minimal time for a discrete crowd model, both for exact and approximate controllability.  
This leads to an algorithm that computes the control and the minimal time. 
We finally present a numerical simulation.
\end{abstract}

\section{Introduction}
In recent years, the study of systems describing a crowd of interacting agents 
has drawn a great interest from the control community.
A better understanding of such interaction phenomena can have a strong impact in several key applications, such as road traffic and egress problems for pedestrians. For few reviews about this topic, see 
\textit{e.g.} \cite{axelrod,active1,camazine,CPTbook,helbing,jackson,SepulchreReview}.

Beside the description of interactions, it is now relevant to study problems of control of crowds, \textit{i.e.} of controlling such systems by acting on few agents, or on a small subset of the configuration space. The nature of the control problem relies on the model used to describe the crowd. In this article, we focus on discrete  models, in which  the position of each agent is clearly identified; the crowd dynamics is described by a large dimensional ordinary differential equation, in which couplings of terms represent interactions. For control of such models, a large literature is available, see \textit{e.g.} reviews \cite{bullo,kumar,lin}, as well as applications, both to pedestrian crowds \cite{ferscha,luh} and to road traffic \cite{canudas,hegyi}.

The key aspect of such crowd models, is that agents are considered identical, or indistinguishable. Thus, control problems need to take into account that each configuration is indeed defined modulo a permutation of agents. Since in general the number of agents is large, it is then interesting to find methods in which control goals (controllability, optimal control) are reached without computing all the permutations.

In the present work, we study the following discrete  model,
where the crowd is described by a vector with $nd$ components ($n,d\in\mb{N}^*$) representing the positions of $n$ agents in 
the space $\mb{R}^d$. The natural (uncontrolled) vector field is denoted by  
$v: \mb{R}^d\rightarrow\mb{R}^d$, assumed Lipschitz and uniformly bounded.
We act on the vector field in a fixed subdomain  $\omega$ of the space, 
which will be a  {nonempty open convex subset} of $\mb{R}^d$. 
The admissible controls are thus functions of the form $\mathds{1}_{\omega}u:\mb{R}^d\times\mb{R}^+\rightarrow\mb{R}^d$.
The dynamics is given by the following 
 ordinary differential equation 
\begin{equation}\label{eq ODE}
\left\{\begin{array}{l}
\dot x_i(t) =v(x_i(t)) + \mathds{1}_{\omega}(x_i(t)) u(x_i(t),t),\\
x_i(0)=x^0_i.
\end{array} \right.
\end{equation}
where $X^0:=\{x^0_1,...,x^0_n\}\subset\mathbb{R}^d$ is the initial configuration of the crowd.
This representation with configurations can be applied only if the different 
agents are considered identical or interchangeable, as it is often the case for crowd models with a large number of agents.
The function $v+\mathds{1}_{\omega}u$ represents the velocity vector field acting on the crowd $X:=\{x_1,...,x_n\}$.
 Thus we can modify this vector field only on a given nonempty open subset $\omega$ of the space $\mb{R}^d$. This kind of control is one of the originality of our research. Such constraint is highly non-trivial, since the control problem is non-linear. 
 At the best of our knowledge, minimal time problems in this setting have not been studied.
 
Notice that~\eqref{eq ODE} represents a specific crowd model, as the velocity field $v$ is given, and interactions between agents are not taken into account. Nevertheless, it is necessary to understand control properties for such simple equations as a first step, before dealing with vector fields depending on the crowd itself. 
Moreover, one can consider this problem as the local perturbation of an interaction model along a reference trajectory described by $v$.

The first question about control of ~\eqref{eq ODE} is to describe controllability results, i..e which configurations can be steered from one to another. We solved this problem in \cite{DMR17}, whose main results are recalled in Section \ref{Sec:MainResults}.

When controllability is ensured, it is then interesting to study minimal time problems. Indeed, from the theoretical point of view, it is the first problem in which optimality conditions can be naturally defined. More related to applications described above, minimal time problems play a crucial role: egress problems can be described in this setting, while traffic control is often described in terms of minimization of (maximal or average) total travel time. 

For discrete models, the dynamics can be written in terms of finite-dimensional control systems. For this reason, minimal time problems can sometimes be addressed with classical (linear or non-linear) control theory, see \textit{e.g.} \cite{agrabook,jurdjevic,sontag}. 
Our main aim here is to derive a method that takes into account the indistinguishability of agents, without passing through the computation of all possible permutations. 
Classical methods are then not adapted. 
 For this reason, our main results presented in Section \ref{Sec:MainResults} will explicitly identify fast algorithms to find minimizing permutations. Moreover, these efficient methods will be also useful for numerical methods, presented in Section \ref{sec:num sim}.

\begin{remark} Another relevant approach fo crowds modeling is given by continuous models. There, the idea is to represent the crowd by the spatial density of agents; in this setting, the evolution of the density solves a partial differential equation of transport type.  
Nonlocal terms (such as convolutions) model the interactions between the agents. For the few available results of control of such systems, see e.g. \cite{PRT15,CPRT17,CPRT17b,DMR17,DMR18}.
\end{remark}

This paper is organised as follows. 
In Sec.~\ref{Sec:MainResults}, we give the setting and our main results about the minimal time for (exact and approximate) controllability for~\eqref{eq ODE}. These results are proved in Sec.~\ref{sec:opt time finite dim}.
Finally, in Sec.~\ref{sec:num sim} we introduce an algorithm to compute the infimum time for approximate control of discrete models and give a numerical example.

\section{Main results}
\label{Sec:MainResults}
To ensure the well-posedness of System~\eqref{eq ODE}, we search a control  $\mathds{1}_{\omega}u$
satisfying the following condition:
\begin{cond}[Carath\'eodory condition]Let $\mathds{1}_{\omega}u$ be such that 
for all $t\in\mathbb{R}$, $x\mapsto \mathds{1}_{\omega}u(x,t)$ is Lipschitz, 
for all $x\in\mathbb{R}^d$, $t\mapsto \mathds{1}_{\omega}u(x,t)$ is measurable
and there exists $M>0$ such that $\|\mathds{1}_{\omega}u\|_{\infty}\leqslant M$.
\end{cond}
In this setting, System~\eqref{eq ODE} is well defined. Then, the {\it flow} can be properly defined.
\begin{definition}
We define the flow associated to a vector field $w:\mb{R}^d\times\mb{R}^+\rightarrow\mb{R}^d$
satisfying the Carath\'eodory condition
as the application $(x^0,t)\mapsto\Phi_t^w(x^0)$ such that, for all $x^0\in\mb{R}^d$, 
$t\mapsto\Phi_t^w(x^0)$ is the unique solution to 
\begin{equation*}
\left\{\begin{array}{l}
\dot x(t) =w(x(t),t)\mbox{ for a.e. }t\geqslant 0,\\
x(0)=x^0.
\end{array}\right.
\end{equation*}
\end{definition}

One of the key properties of solutions to System~\eqref{eq ODE} is that they cannot separate or merge particles. Thus, the general interesting settings for crowd models is the one of distinct configurations as defined below.
\begin{definition}
A configuration $X=\{x_1,...,x_n\}$ is said to be disjoint if $x_i\neq x_j$ for all $i\neq j$.
\end{definition}
Since we deal with velocities $v+\mathds{1}_{\omega}u$ satisfying the Carath\'eo\-dory condition, if $X^0$ is a disjoint configuration,
then the solution $X(t)$ to System~\eqref{eq ODE} is also a disjoint configuration at each time $t\geqslant 0$.

From now on, we will assume that the following condition is satisfied by initial and final configurations.
\begin{cond}[Geometric condition]\label{cond1}
Let  $X^0, X^1$ be two disjoint configurations in $\mb{R}^d$ satisfying:
\begin{enumerate}
\item[(i)] For each  $i \in \{1, \dots, n\}$, 
there exists $t^0>0$ such that $\Phi_{t^0}^v(x^0_i)\in \omega$.
\item[(ii)] For each $i \in \{1, \dots, n\}$,  
there exists $t^1>0$ such that $\Phi_{-t^1}^{v}(x^1_i)\in \omega$.
\end{enumerate}
\end{cond}
The Geometric Condition \ref{cond1} means that the trajectory of each particle crosses the control region forward in time and the trajectories of each position of the target configuration crosses the control region backward in time.
It is the minimal condition that we can expect to steer any initial condition to 
any target. Indeed, we proved in \cite{DMR17}  that one can approximately steer an initial to a final configuration of the System ~\eqref{eq ODE} if they satisfy the Geometric Condition \ref{cond1}.

In the sequel, we will define the following functions 
for all $x\in\mb{R}^d$ and $j\in\{0,1\}$:
\begin{equation*}
\left\{\begin{array}{l}
\overline{t}^0(x):=\inf\{t\in\mb{R}^+:\Phi_{t}^v(x)\in\overline{\omega}\},\\
\overline{t}^1(x):=\inf\{t\in\mb{R}^+:\Phi_{-t}^v(x)\in\overline{\omega}\},\\\noalign{\smallskip}
t^0(x):=\inf\{t\in\mb{R}^+:\Phi_{t}^v(x)\in\omega\},\\\noalign{\smallskip}
t^1(x):=\inf\{t\in\mb{R}^+:\Phi_{-t}^v(x)\in\omega\}.
\end{array}\right.
\end{equation*}

{It is clear that it always holds $\overline{t}^j(x)\leqslant t^j(x)$.
In some situations, this inequality can be strict. For example, in Figure \ref{fig:ex CE CA},
it holds $\overline{t}^1(x^1_1)<t^1(x^1_1)$. Moreover, in this specific case these functions can even be discontinous with respect to $x$.

\begin{figure}[h]
\begin{center}
\begin{tikzpicture}[scale=0.7]
\fill[pattern=dots,opacity = 0.5] (3,0) -- (3,-1) -- (-3,-1) -- (-3,0) -- cycle;
\draw (3,0) -- (3,-1) -- (-3,-1) -- (-3,0) -- cycle;
\node[cross,thick,minimum size=4pt] at (-2,1) {};
\node[cross,thick,minimum size=4pt] at (2,-2) {};
\path (-1.5,1) node {$x^0_1$};
\path (1.7,-2.5) node {$x^1_1$};
\path (2,-0.5) node {$\omega$};
\draw (-2,1) -- (-2,1);
\path (-2.3,-1.5) node {$v$};
\draw (-2,-2) -- (-2,1);
\draw  (0,-2) arc (0:-180:1);
\draw  (0,-2) arc (0:-180:-1);
\path (-2.3,0.5) node {$v$};
\draw (-2,0.5) -- (-2.1,0.6);
\draw (-2,0.5) -- (-1.9,0.6);
\draw (-2,-1.5) -- (-2.1,-1.4);
\draw (-2,-1.5) -- (-1.9,-1.4);
\draw[dotted,thick]  (2.2,-2) arc (0:90:1.2);
\draw[dotted,thick] (-2,-0.2) -- (1,-0.8);
\node[cross,thick,minimum size=4pt] at (2.2,-2) {};
\path (2.5,-2.5) node {$\widetilde{x}^1_1$};
\end{tikzpicture}\end{center}
\caption{Example of difference between $t^1(x^1_1)$ and $\overline{t}^1(x^1_1)$. }\label{fig:ex CE CA}
\end{figure}
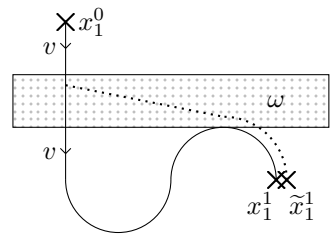

For simplicity, we use the notations
\begin{equation}\label{def:t^l_i}
 \overline{t}^j_i:=\overline{t}^j(x_i^j) \mbox{ and } t^j_i:=t^j(x_i^j),
\end{equation}
for $i\in\{1,...,n\}$ and $j \in \{0,1\}$. We then define 
$$
\left\{\begin{array}{l}
M^*_e(X^0,X^1):=\max\{t_i^0,t_i^1:i=1,...,n\},\\ 
M^*_a(X^0,X^1):=\max\{t_i^0,\overline{t}_i^1:i=1,...,n\}.
\end{array}\right.$$

We now state our first main result.
\begin{theorem}
\label{th:discret exact}
Let $X^0:=\{x^0_1,...,x^0_n\}$ and $X^1:=\{x^1_1,...,x^1_n\}$ be disjoint configurations satisfying the Geometric Condition \ref{cond1}.
Arrange the sequences 
$\{t^0_i\}_i$ and $\{t^1_j\}_j$ to be increasingly and decreasingly ordered, respectively.
Then 
\begin{equation}\label{OT disc CE}
M_e(X^0,X^1):=\max_{i\in\{1,...,n\}}|t^0_i+t^1_i|
\end{equation}
is the infimum time $T_e(X^0,X^1)$ for  exact control of System \eqref{eq ODE} in the following sense:
\begin{itemize}
\item[(i)] For each $T>M_e(X^0,X^1)$, System \eqref{eq ODE} is exactly controllable  from $X^0$ to $X^1$ at time $T$, \textit{i.e.}
there exists a control  $\mathds{1}_{\omega}u:\mb{R}^d\times\mb{R}^+\rightarrow\mb{R}^d$ satisfying the Carath\'eodory condition  and steering $X^0$ exactly to $X^1$.
\item[(ii)] For each $T\in (M^*_e(X^0,X^1),M_e(X^0,X^1)]$,  System \eqref{eq ODE} is not exactly controllable  from $X^0$ to $X^1$.
\item[(iii)] There exists (at most) a finite number of times $T\in[0,M^*_e(X^0,X^1)]$ for 
which System \eqref{eq ODE} is exactly controllable  from $X^0$ to $X^1$.
\end{itemize}

\end{theorem}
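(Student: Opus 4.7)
The plan is to handle the three statements separately, relying on two ingredients: a rearrangement lemma for the min--max assignment problem, and a decomposition of each trajectory into free phases (following $v$) and controlled phases (inside $\omega$). I would first establish that, for any nonnegative finite sequences $(a_i)$ and $(b_j)$, $\min_\sigma \max_i (a_i + b_{\sigma(i)})$ is achieved by pairing the $a_i$'s sorted increasingly with the $b_{\sigma(i)}$'s sorted decreasingly, by a classical pairwise exchange argument. Applied to $a_i = t^0_i$ and $b_j = t^1_j$, this shows that the optimal pairing is the one prescribed in the statement and that its common value is $M_e(X^0,X^1)$.

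For (i), given $T > M_e$, set $\epsilon := T - M_e > 0$ and, after the optimal reordering, assign agent $i$ to target $x^1_i$, so that $t^0_i + t^1_i \leq T - \epsilon$ for every $i$. I would build each target trajectory in three phases: a free flow under $v$ on $[0,t^0_i]$ arriving at $y^0_i := \Phi^v_{t^0_i}(x^0_i) \in \partial\omega$; a controlled transit through $\overline\omega$ on $[t^0_i, T - t^1_i]$, of duration at least $\epsilon$, joining $y^0_i$ to $y^1_i := \Phi^v_{-t^1_i}(x^1_i) \in \partial\omega$ along a smooth curve contained in $\overline\omega$ (which exists by convexity of $\omega$); and a final free flow on $[T - t^1_i, T]$ bringing $y^1_i$ to $x^1_i$. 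To realize all $n$ prescribed trajectories as integral curves of a single field $v + \mathds{1}_{\omega} u$ satisfying the Carath\'eodory condition, I would use the slack $\epsilon$ to stagger the controlled phases in time and to localize $u$ via smooth cut-offs supported in pairwise disjoint space--time tubes around each prescribed trajectory.

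For (ii), fix $T \in (M^*_e, M_e]$ and assume for contradiction that some admissible control steers $X^0$ exactly to $X^1$ at time $T$; let $\sigma$ be the induced matching. Because $\mathds{1}_{\omega} u$ vanishes outside $\omega$, on each maximal time interval where $x_i(t) \notin \omega$ the trajectory coincides with the free flow of $v$. Hence the first entry of $x_i(\cdot)$ into $\omega$ occurs at time exactly $t^0_i$, and after the last exit at some time $\tau_i$ the trajectory follows $v$ until reaching $x^1_{\sigma(i)}$ at time $T$. Since the controlled phase has positive duration (the degenerate case where several of these intervals have zero length is absorbed in (iii)), the last exit is a genuine crossing from $\omega$ to the exterior, so the backward $v$-orbit of $x^1_{\sigma(i)}$ actually enters the open set $\omega$ at time $T - \tau_i$, giving $T - \tau_i \geq t^1_{\sigma(i)}$ and thus $t^0_i + t^1_{\sigma(i)} \leq T$ for every $i$. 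The rearrangement lemma then forces $M_e \leq T$, contradicting $T < M_e$.

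For (iii), on $[0, M^*_e]$ the necessary condition above may fail because the trajectory of some agent can touch $\partial\omega$ tangentially without entering $\omega$, or may coincide with a pure free flow matching a target. Both alignments are described by finitely many analytic equations in $T$ (tangential contacts of the $v$-orbits of the $x^0_i$ and $x^1_j$ with $\partial\omega$, together with the pairing constraint), yielding only finitely many isolated admissible times. The main obstacle of the whole argument is the explicit construction in (i): producing a single Lipschitz control realizing all prescribed individual trajectories while avoiding interactions between agents inside $\omega$; the temporal staggering afforded by the slack $\epsilon$ is the key geometric ingredient that makes this possible.
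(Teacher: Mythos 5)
Your overall architecture matches the paper's (reduce to $\min_\sigma\max_i(t^0_i+t^1_{\sigma(i)})$, identify the optimal pairing by a pairwise exchange, build three-phase trajectories for (i), use the forced free flow for (ii)), but three steps have genuine gaps. The most serious is in (i): you never actually prove that the $n$ prescribed space--time curves can be chosen pairwise disjoint, which is indispensable since they must all be integral curves of a single Carath\'eodory vector field. ``Staggering the controlled phases in time'' is not available: the entry time into $\omega$ and the exit time are forced to be (essentially) $t^0_i$ and $T-t^1_{\sigma(i)}$ by the uncontrolled flow outside $\omega$, so there is no freedom to reschedule. The paper resolves this by minimizing the total space--time transport cost $\sum_{i}\|(y^0_i,s^0_i)-(y^1_{\sigma(i)},T-s^1_{\sigma(i)})\|$ over bistochastic matrices; Krein--Milman gives a minimizing permutation, and for that permutation the straight space--time segments cannot cross (a crossing would allow a cost-decreasing transposition). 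Some such non-crossing selection argument is needed and is absent from your proposal. A secondary issue in (i): you place $y^0_i=\Phi^v_{t^0_i}(x^0_i)$ on $\partial\omega$, where $\mathds{1}_\omega u$ (being Lipschitz and supported in the open set $\omega$) necessarily vanishes, so you cannot start steering there; the paper instead uses part of the slack $\delta$ to pick times $s^j_i\in(t^j_i,t^j_i+\delta/3)$ whose images lie strictly inside the open set $\omega$.

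In (ii), your conclusion $t^0_i+t^1_{\sigma(i)}\leqslant T$ only contradicts $T<M_e$, whereas the statement also excludes $T=M_e$. You need the strict inequality $T>t^0_i+t^1_{\sigma(i)}$, which the paper obtains by showing the controlled trajectory must lie in the \emph{open} set $\omega$ on a nondegenerate open time interval $(\tau_i-\epsilon_i,\tau_i+\epsilon_i)$ disjoint from $(0,t^0_i)\cup(T-t^1_{\sigma(i)},T)$; your parenthetical that the degenerate case is ``absorbed in (iii)'' does not apply, since (iii) only concerns $T\leqslant M^*_e$. Finally, in (iii) your appeal to ``finitely many analytic equations'' is unjustified: $v$ is only Lipschitz and $\partial\omega$ is only the boundary of a convex set, so no analyticity is available. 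The paper's finiteness argument is different and elementary: on $[0,M^*_e]$ the trajectory from the extremal particle $x^0_m$ is entirely uncontrolled, exact controllability at time $t$ forces $\Phi^v_t(x^0_m)=x^1_i$ for some $i$, and infinitely many solution times would produce, by compactness and continuity of $v$, a point of the orbit where $v$ vanishes --- contradicting the fact that this orbit must later enter $\omega$. You should replace your analyticity argument with one of this type.
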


We give a proof of Theorem \ref{th:discret exact} in Section \ref{sec:opt time finite dim}.

We now turn to approximate controllability.
We will use the following distance between configurations.
\begin{definition}
Consider  $X^0:=\{x^0_1,...,x^0_n\}$ and $X^1:=\{x^1_1,...,x^1_n\}$ two configurations of $\mb{R}^d$ and define
the distance 
 \begin{equation*}
\|X^0 -X^1\|:=\inf_{\sigma\in \mathbb{S}_n}\left(\sum_{i=1}^n\frac{1}{n} |x^0_i-x^1_{\sigma(i)}| \right),
\end{equation*}
where $\mathbb{S}_n$ is the set of permutations on $\{1,...,n\}$.
\end{definition}
This distance\footnote{This distance coincides with the Wasserstein distance for empirical measures (see \cite[p. 5]{V03}).} clearly takes into account the indistinguishability of agents, in the sense that its value does not depend on the ordering of $X_0,X_1$.

We now state our second main result.
\begin{theorem}
\label{th:discret approx}
Let $X^0:=\{x^0_1,...,x^0_n\}$ and $X^1:=\{x^1_1,...,x^1_n\}$ be  disjoint configurations satisfying the Geometric Condition \ref{cond1}.
Arrange the sequences 
$\{t^0_i\}_i$ and $\{\overline{t}^1_j\}_j$ to
 be increasingly and decreasingly ordered, respectively.
Then  
\begin{equation*}
M_a(X^0,X^1):=\max_{i\in\{1,...,n\}}|t^0_i+\overline{t}^1_i|
\end{equation*}
is the infimum time $T_a(X^0,X^1)$ for approximate  controllability of System \eqref{eq ODE} in the following sense:
\begin{itemize}
\item[(i)] For each $T>M_a(X^0,X^1)$, System \eqref{eq ODE} is approximately controllable  from $X^0$ to $X^1$ at time $T$, \textit{i.e.}
for any $\varepsilon >0$, there exists a control  $\mathds{1}_{\omega}u$ satisfying the Carath\'eodory condition such that the associated solution $X(t)$ to System \eqref{eq ODE} satisfies 
$\|X(T)-X_1\| < \varepsilon.$
 \item[(ii)] For each $T\in (M^*_a(X^0,X^1),M_a(X^0,X^1)]$,  System \eqref{eq ODE} is not approximately controllable  from $X^0$ to $X^1$.
\item[(iii)] There exists (at most) a finite number of times $T\in[0,M^*_a(X^0,X^1)]$ for which System 
\eqref{eq ODE} is approximately controllable  from $X^0$ to $X^1$.
\end{itemize}
\end{theorem}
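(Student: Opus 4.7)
My plan is to reduce Theorem~\ref{th:discret approx} to Theorem~\ref{th:discret exact} via a perturbation of the target configuration, and to handle the sharp lower bound through a rearrangement-type argument combined with a continuity/approximation step. The central observation is that $\overline{t}^1_i$ and $t^1_i$ can differ only when the reverse trajectory $t \mapsto \Phi^v_{-t}(x^1_i)$ grazes $\partial\omega$ without entering $\omega$; slightly moving the endpoint $x^1_i$ transverse to this tangency makes the backward trajectory cross into $\omega$, and in the limit $t^1(\widetilde{x}^1_i)\to \overline{t}^1_i$.

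For Part (i), given $T > M_a(X^0,X^1)$ and $\varepsilon>0$, I would construct $\widetilde{X}^1=\{\widetilde{x}^1_1,\ldots,\widetilde{x}^1_n\}$ with $\|\widetilde{X}^1-X^1\|<\varepsilon$ such that, for every $i$, the backward trajectory of $\widetilde{x}^1_i$ enters $\omega$ at a time $t^1(\widetilde{x}^1_i)$ arbitrarily close to $\overline{t}^1_i$. Since $T>M_a(X^0,X^1)=\max_i(t^0_i+\overline{t}^1_i)$ (with the sequences in opposite order) and the quantity $M_e(X^0,\widetilde{X}^1)$ depends continuously on these perturbed return times, one can choose the perturbation small enough that $M_e(X^0,\widetilde{X}^1)<T$. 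Theorem~\ref{th:discret exact}(i) then yields a Carath\'eodory control that steers $X^0$ exactly to $\widetilde{X}^1$ at time $T$, hence approximately to $X^1$ within distance $\varepsilon$.

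For Part (ii), the key structural fact is the rearrangement identity
\begin{equation*}
M_a(X^0,X^1)=\min_{\sigma\in\mathbb{S}_n}\max_{i=1,\ldots,n}\bigl(t^0_{\sigma(i)}+\overline{t}^1_i\bigr),
\end{equation*}
which is achieved by pairing the increasing sequence $\{t^0_i\}$ with the decreasing sequence $\{\overline{t}^1_i\}$. On the other hand, any trajectory of System~\eqref{eq ODE} reaching $x^1_i$ at time $T$ must either follow the pure $v$-flow backward for time at least $\overline{t}^1_i$ before entering $\overline{\omega}$, or start from some $x^0_j$ that needs time at least $t^0_j$ to enter $\omega$ forward. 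Summing these two unavoidable transit times along any matching and passing to the limit $\|X(T)-X^1\|\to 0$ (using continuity of $\Phi^v$ and the fact that the constraint involves the closed set $\overline{\omega}$, which is stable under limits) gives $T\geqslant M_a(X^0,X^1)$, hence non-controllability strictly below $M_a$. For Part (iii), I would argue as in Theorem~\ref{th:discret exact}: on $[0,M^*_a]$ the matching problem reduces to solving finitely many analytic equations linking the forward images of $X^0$ and the backward images of $X^1$, and the set of solutions in $T$ is discrete.

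The main obstacle will be making the lower bound in Part (ii) rigorous. The function $\overline{t}^1$ is in general only upper semicontinuous, and distinct limiting assignments may compete as $\varepsilon\to 0$ along a minimizing sequence of controls. To handle this I would work with permutations $\sigma_\varepsilon$ realizing $\|X_\varepsilon(T)-X^1\|$, extract a limit permutation by finiteness of $\mathbb{S}_n$, and then transfer the transit-time inequality to this limit by lower semicontinuity of the forward hitting time $t^0$ and a careful use of the closure of $\omega$ in the backward estimate involving $\overline{t}^1$.
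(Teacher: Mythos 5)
Your overall strategy coincides with the paper's: for Item (i) you perturb the target into a configuration $\widetilde{X}^1$ whose backward hitting times are controlled by $\overline{t}^1_i$ and then invoke the exact-controllability result; for Item (ii) you pass to the limit along a minimizing sequence of controls, fix a permutation by finiteness of $\mathbb{S}_n$, and use that $\overline{\omega}$ is closed to transfer the transit-time bound. The perturbation construction in (i) and the rearrangement identity $M_a=\min_{\sigma}\max_i(t^0_{\sigma(i)}+\overline{t}^1_i)$ are exactly the paper's steps (the latter is the transposition argument used to show $\widetilde{M}_a=M_a$), and the open-interval part of (ii) is sound.

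There are, however, two genuine gaps. First, in (ii) your limit argument yields only $T\geqslant M_a(X^0,X^1)$, and you explicitly conclude ``non-controllability strictly below $M_a$''; but the statement asserts non-controllability on the half-open interval $(M^*_a,M_a]$, i.e.\ including $T=M_a$. The strictness is precisely what is lost in the limit $\varepsilon\to 0$: for each fixed control the exact-controllability obstruction gives the strict bound $T>t^0_{\sigma(i)}+t^1(y_{\varepsilon,i})$, but $t^1(y_{\varepsilon,i})$ may approach $\overline{t}^1_i$ from below, so only the non-strict inequality survives. A separate argument is needed at the endpoint, and your proof as written does not supply one. Second, Item (iii) is not proved. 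The flow of a merely Lipschitz $v$ is not analytic, so ``finitely many analytic equations'' is unavailable, and a discrete subset of $[0,M^*_a]$ need not be finite. More importantly, you skip the key reduction: some index $j$ satisfies $t^0(x^0_j)=M^*_a>T$ (or the time-reversed analogue), so the trajectory issued from $x^0_j$ never meets $\omega$ on $[0,T]$ and its endpoint $\Phi^v_T(x^0_j)$ is independent of the control; approximate controllability for every $\varepsilon$, combined with the disjointness of $X^1$, then upgrades the approximate condition to the exact identity $\Phi^v_T(x^0_j)=x^1_k$ for a fixed $k$. Finiteness of the admissible times then follows because infinitely many solutions of $\Phi^v_t(x^0_j)=x^1_k$ in the compact interval would yield an accumulation point at which $v$ vanishes along the trajectory, contradicting that this trajectory eventually enters $\omega$. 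Without the upgrade to exact hitting and the non-vanishing argument, Item (iii) remains open.
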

We give a proof of Theorem \ref{th:discret approx} in Section \ref{sec:opt time finite dim}.

In both theorems, controllability can occur at small times but it is a very specific situation which is not entirely due to the control. See Remark~\ref{rmq:T2*} for examples.

\begin{remark}
It is well know that the notions of approximate and exact controllability 
are equivalent for finite dimensional linear systems, when the control acts linearly, see e.g. \cite{C09}.
We remark that it is not the case for System \eqref{eq ODE}, which highlights the fact that we are dealing with
a non-linear control problem. The difference is indeed related to the fact that for exact and approximate controllability, tangent trajectories  give different behaviors.
 For example, in Figure \ref{fig:ex CE CA}, if we denote by $X^0:=\{x^0_1\}$ and $X^1:=\{x^1_1\}$, 
 then it holds   $M_a(X^0,X^1)< M_e(X^0,X^1)$ due to the presence of a tangent trajectory.
An approximate trajectory is represented as dashed lines in the case $T\in (  M_a(X^0,X^1), M_e(X^0,X^1))$
in Figure \ref{fig:ex CE CA}.
 \end{remark}

\section{Proofs of main results}\label{sec:opt time finite dim}
In this section, we prove Theorem \ref{th:discret exact} and \ref{th:discret approx}

\subsection{Minimal time for exact controllability}

We first obtain the following result:

\begin{proposition}\label{prop: dim finie}
Let $X^0:=\{x^0_1,...,x^0_n\}\subset\mb{R}^d$ and $X^1:=\{x^1_1,...,x^1_n\}\subset\mb{R}^d$ be two disjoint configurations satisfying the Geometric Condition \ref{cond1}.
Consider the sequences $\{t_i^0\}_{i}$ and $\{t_i^1\}_{i}$ given in \eqref{def:t^l_i}. 
Then 
\begin{equation}\label{minimal time}
\widetilde{M}_e(X^0,X^1):=\min_{\sigma\in \mathbb{S}_n}\max_{i\in\{1,...,n\}}|t^0_i+t_{\sigma (i)}^1|
\end{equation}
is the infimum time $T_e(X^0,X^1)$ to  exactly control System \eqref{eq ODE} in the sense of Theorem \ref{th:discret exact}.
\end{proposition}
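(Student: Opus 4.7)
The plan is to prove the two implications separately: (a) for any $T>\widetilde{M}_e(X^0,X^1)$, exact controllability in time $T$ holds, and (b) for any $T<\widetilde{M}_e(X^0,X^1)$, exact controllability in time $T$ fails. Together with a straightforward continuity argument, this identifies the infimum time with $\widetilde{M}_e(X^0,X^1)$.

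For the achievability direction (a), I would fix a permutation $\sigma^*\in\mathbb{S}_n$ realizing the minimum in \eqref{minimal time} and, for each $i$, choose an auxiliary time $\tau_i\in(t^0_i,T-t^1_{\sigma^*(i)})$; such $\tau_i$ exists because $T>t^0_i+t^1_{\sigma^*(i)}$ by hypothesis. The strategy is to define an individual target curve $\gamma_i:[0,T]\to\mathbb{R}^d$ which: on $[0,t^0_i]$ follows $\Phi^v$ from $x^0_i$ until it enters $\omega$; on $[T-t^1_{\sigma^*(i)},T]$ is the $v$-trajectory hitting $x^1_{\sigma^*(i)}$ at time $T$ starting from a point in $\omega$; and on the middle interval stays inside $\omega$ and interpolates smoothly between the two entry/exit points. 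Using the openness of $\omega$ and the freedom in the middle interval, the curves $\gamma_i$ can be chosen pairwise disjoint in space-time (classical transversality / small Lipschitz perturbations, since the endpoints are already distinct). Then one defines the control $u(x,t)$ as a Lipschitz-in-$x$, measurable-in-$t$ field supported in $\omega$, satisfying $u(\gamma_i(t),t)=\dot\gamma_i(t)-v(\gamma_i(t))$ whenever $\gamma_i(t)\in\omega$, and extended by a standard Whitney/Lipschitz extension with cutoffs to obey the Carathéodory condition. Uniqueness of the flow then forces each agent's trajectory to coincide with $\gamma_i$, giving $X(T)=X^1$.

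For the impossibility direction (b), suppose exact control occurs at some time $T$. The induced map from initial to final indices is a permutation $\sigma$. For each $i$, the trajectory of agent $i$ satisfies $\dot x_i=v(x_i)$ outside $\omega$; hence the initial segment up to the first entry into $\omega$ has duration at least $t^0_i$, by definition of $t^0_i$. Symmetrically, since $x_i(T)=x^1_{\sigma(i)}$ is reached exactly and since outside $\omega$ the dynamics is $v$, the terminal segment from the last exit of $\omega$ to $x^1_{\sigma(i)}$ must be an integral curve of $v$ starting inside $\omega$; this forces its duration to be at least $t^1_{\sigma(i)}$ (note the open set in the definition of $t^1$ is what is needed here for exact control, in contrast with the approximate case). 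Adding these disjoint portions yields $T\geqslant t^0_i+t^1_{\sigma(i)}$ for every $i$, whence $T\geqslant\max_i(t^0_i+t^1_{\sigma(i)})\geqslant\widetilde{M}_e(X^0,X^1)$.

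The main obstacle is the achievability step: producing a single space-time Lipschitz vector field $u(x,t)$ on $\omega$ that simultaneously realizes $n$ prescribed trajectories $\gamma_i$. Existence is clear once the $\gamma_i$ are space-time disjoint with uniformly separated tubes, but arranging this separation while respecting the rigid entry and exit times $t^0_i$, $T-t^1_{\sigma^*(i)}$ (at which the agent is forced to be on $\partial\omega$ with the $v$-flow direction) requires using that $\omega$ is open and convex, and that there is a strictly positive "slack" $T-\widetilde{M}_e>0$ to reroute colliding tubes. The lower bound, by contrast, is essentially a direct consequence of the rigidity of the dynamics outside $\omega$, together with the combinatorial fact that the induced matching is a permutation.
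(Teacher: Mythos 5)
Your overall architecture (upper bound by explicit construction, lower bound from the rigidity of the dynamics outside $\omega$) matches the paper's, but both halves contain genuine gaps.

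In the achievability step you fix an arbitrary permutation $\sigma^*$ minimizing the min--max and assert that the $n$ space-time curves ``can be chosen pairwise disjoint'' by transversality and small perturbations. This is the crux of the whole proof and cannot be dispatched this way: a Carath\'eodory flow cannot realize crossing trajectories (uniqueness of solutions), and a min--max-minimizing permutation may force crossings that no perturbation removes --- most visibly when $d=1$, where whether two agents must cross is a topological property of the matching, not of the particular curves joining the endpoints. The paper's essential idea, absent from your proposal, is to select the permutation differently: it minimizes the linear transport cost $\frac1n\sum_{i,j}K_{ij}\pi_{ij}$ over bistochastic matrices, where $K_{ij}$ is the Euclidean distance in $\mb{R}^{d+1}$ between the space-time entry point $(y_i^0,s_i^0)$ and exit point $(y_j^1,T-s_j^1)$, set to $+\infty$ when $s_i^0\geqslant T-s_j^1$. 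Krein--Milman yields an optimal permutation matrix, and the triangle inequality shows that this optimal matching produces non-crossing straight segments inside the convex set $\omega$ (swapping two crossing segments strictly decreases the cost). Only after this step do the disjoint tubes $B_{r_i}(z_i(t))\subset B_{R_i}(z_i(t))\subset\omega$ and the explicit piecewise control exist. Your appeal to the slack $T-\widetilde{M}_e>0$ ``to reroute colliding tubes'' does not substitute for the correct choice of permutation.

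Your impossibility step is also flawed as stated: the claim that exact controllability fails for every $T<\widetilde{M}_e(X^0,X^1)$ is false --- see Remark~\ref{rmq:T2*}, where the uncontrolled rotation maps $X^0$ to $X^1$ at a time $T<M^*_e(X^0,X^1)$. Your argument silently assumes that each trajectory enters $\omega$, so that $[0,T]$ splits into an approach of duration at least $t^0_i$ and a departure of duration at least $t^1_{\sigma(i)}$; this entry into $\omega$ is only guaranteed when $T>M^*_e(X^0,X^1)$ (otherwise the trajectory may coincide with $\Phi^v_t(x^0_i)$ on all of $[0,T]$ and never meet $\omega$). This is exactly why the conclusion is split into items (ii) and (iii) of Theorem~\ref{th:discret exact}: non-controllability only on $(M^*_e,M_e]$, plus finiteness of the exceptional times in $[0,M^*_e]$, the latter proved by a compactness argument showing that infinitely many such times would force $v$ to vanish along the uncontrollable trajectory. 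You establish neither: you obtain only the non-strict bound $T\geqslant t^0_i+t^1_{\sigma(i)}$, which does not exclude $T=M_e(X^0,X^1)$ itself (the interval in item (ii) is closed on the right; the paper gets strict inequality from the positive time the trajectory must spend inside the open set $\omega$), and item (iii) is not addressed at all.
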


\begin{proof} 
We first prove the result corresponding to { Item (i)} of Theorem \ref{th:discret exact}. Let $T:=\widetilde{M}_e(X^0,X^1)+\delta$ with $\delta>0$. For all $i\in\{1,...,n\}$, there exist $s_i^0\in (t_i^0,t_i^0+\delta/3)$ and $s_i^1\in (t_i^1,t_i^1+\delta/3)$
such that $y_i^0:=\Phi_{s_i^0}^v(x_i^0)\in\omega\mbox{ and }y_i^1:=\Phi_{-s_i^1}^v(x_i^1)\in\omega$.

\textbf{Item (i), Step 1:} 
The goal is to build  a flow with no intersection of the trajectories $x_i(t),x_j(t)$ with $i\neq j$. 
For all $i,j\in\{1,...,n\}$, we define the cost
\begin{equation*}
K_{ij}(y_i^0,s_i^0,y_j^1,s_j^1):= \|(y_i^0,s_i^0)-(y_j^1,T-s_j^1)\|_{\mb{R}^{d+1}}
\end{equation*}
if $s_i^0<T-s_j^1$ and $K_{ij}(y_i^0,s_i^0,y_j^1,s_j^1):=\infty$ otherwise.
Consider the minimization problem:
\begin{equation}\label{eq:inf}
\inf\limits_{\pi\in\mc{B}_n}\frac{1}{n}\sum_{i,j=1}^n K_{ij}(y_i^0,s_i^0,y_j^1,s_j^1)\pi_{ij},
\end{equation}
where $\mc{B}_n$ is the set of the bistochastic $n\times n$ matrices, \textit{i.e.} 
the matrices $\pi:=(\pi_{ij})_{1\leqslant i,j\leqslant n}$ satisfying, for all $i,j\in\{1,...,n\}$,
$\sum_{i=1}^n\pi_{ij}=1$, $\sum_{j=1}^n\pi_{ij}=1$, $\pi_{ij}\geqslant 0.$
The infimum in \eqref{eq:inf} is finite since $T>\widetilde{M}_e(X^0,X^1)$.
The problem \eqref{eq:inf} is a linear minimization problem on the closed convex set $\mc{B}_n$.
Hence, as a consequence of Krein-Milman's Theorem (see \cite{KM40}), the functional \eqref{eq:inf} admits a minimum
at an extremal point of $\mathcal{B}_n$,
\textit{i.e.} a permutation matrix. 
Let $\sigma$ be a permutation, for which the associated matrix minimizes \eqref{eq:inf}.
Consider the straight 
 trajectories  $y_i(t)$ steering $y_i^0$ at time $s_i^0$ to $y_{\sigma(i)}^1$ at time $T-s_{\sigma(i)}^1$, that are explicitly defined by
\begin{equation}
y_i(t):=
\frac{T-s_{\sigma(i)}^1-t}{T-s_{\sigma(i)}^1-s_i^0}y_i^0+\frac{t-s_{i}^0}{T-s_{\sigma(i)}^1-s_i^0}y_{\sigma(i)}^1.
\label{e:straight}
\end{equation}
We now prove by contradiction that these trajectories have no intersection:
Assume that there exist $i$ and $j$ such that the associated trajectories $y_i(t)$ and $y_j(t)$ intersect. 
If we associate $y^0_i$ and $y^0_j$ to $y^0_{\sigma(j)}$ and 
$y^0_{\sigma(i)}$ respectively, \textit{i.e.} we consider the permutation 
 $\sigma\circ\mc{T}_{i,j}$,  where $\mc{T}_{i,j}$ is the transposition between the  $i^{\mbox{\footnotesize th}}$ 
and the $j^{\mbox{\footnotesize th}}$ elements,
then the associated  cost \eqref{eq:inf} is strictly smaller than 
the cost associated to $\sigma$
(see Figure \ref{fig:geo}). This is in contradiction with the fact that $\sigma$ minimizes \eqref{eq:inf}.
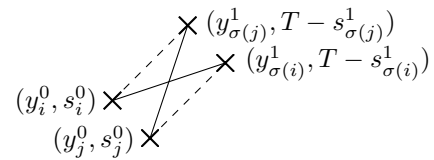
\begin{figure}[h]
\begin{center}
\begin{tikzpicture}[scale=0.5]
\node[cross,thick,minimum size=4pt] at (1,0) {};
\node[cross,thick,minimum size=4pt] at (0,1) {};
\node[cross,thick,minimum size=4pt] at (3,2) {};
\node[cross,thick,minimum size=4pt] at (2,3) {};
\draw[-] (0,1) -- (3,2);
\draw[-] (1,0) -- (2,3);
\draw[dashed] (0,1) -- (2,3);
\draw[dashed] (1,0) -- (3,2);
\path (-1.5,1) node {$(y_i^0,s_i^0)$};
\path (-0.5,-0.1) node {$(y_j^0,s_j^0)$};
\path (5,3) node {$(y_{\sigma(j)}^1,T-s_{\sigma(j)}^1)$};
\path (6,2) node {$(y_{\sigma(i)}^1,T-s_{\sigma(i)}^1)$};
\end{tikzpicture}
\caption{An optimal permutation.}\label{fig:geo}
\end{center}\end{figure}

\textbf{Item (i), Step 2: } 
We now define a corresponding control sending $x_i^0$
to $x_{\sigma(i)}^1$ for all $i\in\{1,...,n\}$.
Consider a trajectory $z_i$ satisfying:
\begin{equation*}
z_i(t):=\left\{\begin{array}{ll}
\Phi_t^v(x_i^0)&\mbox{ for all }t\in(0,s_i^0),\\
y_i(t)&\mbox{ for all }t\in(s_i^0,T-s_{\sigma(i)}^1),\\
\Phi_{t-T}^v(x_i^1)&\mbox{ for all }t\in(T-s_{\sigma(i)}^1,T).
\end{array}\right.
\end{equation*}
The trajectories $z_i$ have no intersection.
Since $\omega$ is convex, then, using the definition of the trajectory $y_i(t)$ in \eqref{e:straight},
the points $y_i(t)$ belong to $\omega$ for all $t\in(s_i^0,T-s^1_{\sigma(i)})$.
For all $i\in \{1,...,n\}$, 
choose $r_i,R_i$ satisfying $0<r_i<R_i$ and such that for all $t\in(s_i^0,T-s^1_{\sigma(i)})$ it holds
\begin{equation*}
B_{r_i}(z_i(t))\subset B_{R_i}(z_i(t))\subset\omega
\end{equation*}
and, for all $t\in(0,T)$ and $i,j\in\{1,...,n\}$, it holds
\begin{equation*}
B_{R_i}(z_i(t))\cap B_{R_j}(z_j(t))=\varnothing.
\end{equation*}
Such radii $r_i,R_i$ exist as a consequence of the fact that we deal with a finite number of trajectories that do not cross.
The corresponding control can be chosen as a $\mc{C}^{\infty}$ function satisfying
\begin{equation*}
u(x,t):=\left\{\begin{array}{ll}
\dfrac{y_{\sigma(i)}^1-y_i^0}{T-s_{\sigma(i)}^1-s_i^0}-v&
\hspace*{-2mm}\begin{array}{l}\mbox{if }t\in (s_i^0,T-s^1_{\sigma(i)})\\
\mbox{ and }x\in B_{r_i}(z_i(t)),\end{array}\\ 
u(x,t):=0 
&\hspace*{-2mm}\begin{array}{l}\mbox{if }t\in (s_i^0,T-s^1_{\sigma(i)})\\
\mbox{ and }x\not\in B_{R_i}(z_i(t)),\end{array}\\
u(x,t):=0 &\mbox{if }t\not\in (s_i^0,T-s^1_{\sigma(i)}).\end{array}\right.
\end{equation*}
This control then satisfies the Carath\'eodory condition and each $i$-th component of the associated solution to System \eqref{eq ODE}  is $z_i(t)$, thus $u$ steers $x_i^0$ to $x^1_{\sigma(i)}$ in time $T$.

\textbf{Item (ii):} Assume that System \eqref{eq ODE} 
is exactly controllable at a time $T> M^*_e(X^0,X^1)$, and consider $\sigma$ the corresponding permutation defined by $x_{i}(T)=x^1_{\sigma(i)}$. The idea of the proof is that the trajectory steers $x^0_i$ to $\omega$ in time $t^0_i$, then it moves inside $\omega$ for a small but positive time, then it steers a point from $\omega$ to $x^i_{\sigma(i)}$ 
in time $t^1_{\sigma(i)}$, hence $T>t^0_i+t^1_{\sigma(i)}$.

Fix an index $i\in\{1,...,n\}$. 
First recall the definition of $t^0_i,t^1_{\sigma(i)}$ and observe that it holds both $T>t^0_i$ and $T>t^1_{\sigma(i)}$. Then, the trajectory $x_i(t)$ satisfies\footnote{These estimates hold even if $x^0_i\in \omega$, for which it holds $t^0_i=0$.}  $x_i(t)\not\in \omega$ for all $t\in (0,t^0_i)$, as well as $x_i(t)\not\in\omega$ for all $t\in(T-t^1_{\sigma(i)},T)$. Moreover, we prove that it exists $\tau_i\in(0,T)$ for which it holds $x_i(\tau_i)\in\omega$. By contradiction, if such $\tau_i$ does not exist, then the trajectory $x_i(t)$ never crosses the control region, hence it coincides with $\Phi_t^v(x^0_i)$. But in this case, by definition of $t^0_i$ as the infimum of times such that $\Phi_t^v(x^0_i)\in\omega$ and recalling that $t^0_i<T$, there exists $\tau_i\in(t^0_i,T)$ such that it holds $x_i(\tau_i)=\Phi_{\tau_i}^v(x^0_i)\in\omega$. Contradiction. Also observe that $\omega$ is open, hence there exists $\epsilon_i$ such that $x_i(\tau)\in\omega$ for all $\tau\in(\tau_i-\epsilon_i,\tau_i+\epsilon)$.

We merge the conditions $x_i(t)\not\in \omega$ for all $t\in (0,t^0_i)\cup(T-t^1_{\sigma(i)},T)$ with $x_i(\tau)\in\omega$ for all $\tau\in(\tau_i-\epsilon_i,\tau_i+\epsilon_i)$ with a given $\tau_i\in(0,T)$. This implies that it holds $t^0_i<\tau_i<T-t^1_{\sigma(i)}$, hence \begin{equation*}T>t^0_i+t_{\sigma (i)}^1.
\end{equation*}

Such estimate holds for any $i\in\{1,...,n\}$. Thus, using the definition of $\widetilde{M}_e(X^0,X^1)$, it holds $T>\widetilde{M}_e(X^0,X^1)$.

\textbf{Item (iii):} 
By definition of $M_e^*(X^0,X^1)$, 
there exists $l\in\{0,1\}$ and $m\in \{1,...,n\}$ such that $M^*_e(X^0,X^1)=t_m^l$. 
We only study the case $l=0$, since the case $l=1$ can be recovered by reversing time. 
By definition of $t_m^0$, the trajectory $\Phi^v_t(x^0_m)$ satisfies $\Phi^v_t(x^0_m)\not\in \omega$ for all $t\in[0,M^*_e(X^0,X^1)]$. Then, for any choice of the control $u$ localized in $\omega$, it holds $\Phi^{v+\mathds{1}_{\omega}u}_{t}(x_m^0)=\Phi^v_t(x^0_m)$, \textit{i.e.} the choice of the control plays no role in the trajectory starting from $x^0_m$ on the time interval $t\in[0,M^*_e(X^0,X^1)]$. Observe that it holds $v(\Phi^v_t(x^0_m))\neq 0$ for all $t\in[0,M_e^*(X^0,X^1)]$, due to the fact that the vector field is time-independent and the trajectory $\Phi^v_t(x^0_m)$ enters $\omega$ for some $t>M_e^*(X^0,X^1)$. 

We now prove that the set of times $t\in[0,M_e^*(X^0,X^1)]$ for which exact controllability holds is finite. A necessary condition to have exact controllability at time $t$ is that the equation $\Phi^v_t(x^0_m)=x^1_i$ admits a solution for some time $t\in[0,M_e^*(X^0,X^1)]$ and index $i\in\{1,...,n\}$. Then, we aim to prove that the set of times-indexes $(t,i)$ solving such equation is finite. By contradiction, assume to have an infinite number of solutions $(t,i)$. Since the set $i\in\{1,...,n\}$ is finite, this implies that there exists an index $I$ and an infinite number of (distinct) times $t_k\in[0,M_e^*(X^0,X^1)]$ such that $\Phi^v_{t_k}(x^0_m)=x^1_I$.
By compactness of $[0,M_e^*(X^0,X^1)]$, there exists a converging subsequence (that we do not relabel) $t_k\to t_*\in[0,M_e^*(X^0,X^1)]$. Since $v$ is continuous, we can compute $v(\Phi^v_{t_*}(x^0_m))$ by using the definition and taking the subsequence $t_k\to t_*$, that gives 
$$v(\Phi^v_{t_*}(x^0_m))=\lim_{k\to\infty}\frac{\Phi^v_{t_k}(x^0_m)-\Phi^v_{t_*}(x^0_m)}{t_k-t^*}
=0.$$ This is in contradiction with the fact that $v(\Phi^v_t(x^0_m))\neq 0$ for all $t\in[0,M_e^*(X^0,X^1)]$.
\end{proof}

Formula \eqref{minimal time} leads to the proof of Theorem \ref{th:discret exact}.

{\it Proof of Theorem \ref{th:discret exact}}. Consider $\widetilde{M}_e(X^0,X^1)$ given in \eqref{minimal time}.
By relabeling particles, we assume that the sequence  $\{t_i^0\}_{i\in\{1,...,n\}}$ is increasingly ordered.
Let $\sigma_0$ be a minimizing permutation in  \eqref{minimal time}.
We build recursively a sequence of permutations $\{\sigma_1,...,\sigma_n\}$ as follows: 
Let $k_1$ be such that $t_{\sigma_0(k_1)}^1$ is a maximum of the set $\{t_{\sigma_0(1)}^1,...,t_{\sigma_0(n)}^1\}$.
We denote by $\sigma_1:=\sigma_0\circ\mc{T}_{1,k_1}$,  where $\mc{T}_{i,j}$ is the transposition between the  $i$-th
and the $j$-th elements. 
It holds 
$$
t^0_{k_1}+t^1_{\sigma_0(k_1)}\geqslant \max\{t_1^0+t^1_{\sigma_0(1)},t_1^0+t^1_{\sigma_1({1})}, t_{k_1}^0+t^1_{\sigma_1(k_1)}\}.
$$
Thus $\sigma_1$ minimizes  \eqref{minimal time} too, since it holds
$$
\max_{i\in\{1,...,n\}}\{t_i^0+t_{\sigma_0(i)}^1\}
\geqslant \max_{i\in\{1,...,n\}}\{t_i^0+t_{\sigma_1(i)}^1\}.
$$
We then build iteratively the permutation $\sigma_k$.
The sequence $\{t^1_{\sigma_n(1)},...,t^1_{\sigma_n(n)}\}$ is then decreasing and $\sigma_n$ is a minimizing permutation in \eqref{minimal time}.
Thus $\widetilde{M}_e(X^0,X^1)=M_e(X^0,X^1)$.
\hfill $\blacksquare$ 

With Theorem \ref{th:discret exact}, we give an explicit and simple expression of the infimum time for exact controllability of discrete models. This result is also useful for numerical simulations of Section \ref{sec:num sim}.

\subsection{Minimal time for approximate controllability}

We now prove Theorem \ref{th:discret approx}, which characterizes the infimum time for approximate control of System \eqref{eq ODE}.

{\it Proof of Theorem \ref{th:discret approx}.} We first prove \textbf{Item (i)}. 
As for Theorem~\ref{th:discret exact}, we first prove that the minimal time is 
\begin{equation*}
\widetilde{M}_a(X^0,X^1):=\min_{\sigma\in \mathbb{S}_n}\max_{i\in\{1,...,n\}}|t^0_i+\overline{t}_{\sigma (i)}^1|.
\end{equation*}
Indeed, as in the proof of Theorem~\ref{th:discret exact}, the permutation method implies
$\widetilde{M}_a(X^0,X^1)=M_a(X^0,X^1)$. This point is left to the reader.

First assume that $T>\widetilde{M}_a(X^0,X^1)$. Let $\varepsilon>0$. 
For each $x^1_i$, we prove the existence of points $y_i^1$ satisfying 
\begin{equation}
|y_i^1-x^1_i|\leqslant \varepsilon
\mbox{ and }y_i:=\Phi^v_{-\overline{t}_i^1}(y_i^1)\in\omega.\label{e-yi1}
\end{equation}
For each $x^1_i$, observe that the Geometric Condition \ref{cond1} implies that either $x^1_i\in\omega$ or that the trajectory enters $\omega$ backward in time. In the first case, define $y_i^1:=x_i^1$. In the second case, remark that $v(\Phi^v_{-t}(x^1_i))$ is nonzero for a whole interval $t\in[0,\tilde t]$, with $\tilde t>\bar t^1_i$, 
and $\Phi^v_{-\bar t_i^1}(x^1_i)\in\overline{\omega}$, hence the flow $\Phi^v_{-\bar t^1_i}(\cdot)$ is a diffeomorphism in a neighborhood of $x^1_i$. Then, there exists $y_i^1\in\mb{R}^d$ such that \eqref{e-yi1} is satisfied.

We denote by $Y^1:=\{y^1_1,...,y_n^1\}$.
For all $i\in\{1,...,n\}$, since $y_i\in\omega$, then $t^1(y_i^1)\leqslant \overline{t}_i^1$, hence 
$$\widetilde{M}_e(X^0,Y^1)\leqslant\widetilde{M}_a(X^0,X^1)<T.$$
Proposition \ref{prop: dim finie} implies that we can exactly steer $X^0$ to $Y^1$ at time $T$ with a control $u$
satisfying the Carath\'eodory condition. 
Denote by $X(t)$ the solution to System \eqref{eq ODE} for the initial condition $X^0$ and the control $u$.
It then holds 
\begin{equation*}\|X^1-X(T)\|=\|X^1-Y^1\|\leqslant \sum_{i=1}^n\frac{1}{n}|y_i^1-x^1_i|\leqslant\varepsilon.\label{e-apprW1}
\end{equation*}

We now prove \textbf{Item (ii)}. Consider a control time $T>M^*_a(X^0,X^1)$ at which System \eqref{eq ODE} is approximately controllable. We aim to prove that it satisfies $T>M_a(X^0,X^1)$. 
For each $k\in\mathbb{N}^*$, there exists a control $u_k$
satisfying the Carath\'eodory condition such that the corresponding solution $X_k(t)$ to System \eqref{eq ODE} satisfies
\begin{equation}\label{eq:W1 1/k}
\|X^1-X_k(T)\|\leqslant 1/k.
\end{equation}
We denote by $Y^1_k:=\{y_{k,1}^1,...,y_{k,n}^1\}$ the configuration defined by $y_{k,i}^1:=X_{k,i}(T)$, where $X_{k,i}$ is the $i$-th component of $X_k$. Since $X^0$ is disjoint and $u_k$ satisfies the Carath\'eodory condition, then $Y^1_k$ is disjoint too.
We now prove that it holds
\begin{equation} \label{T_Me*}
T > M^*_e(X^0, Y^1_k).
\end{equation}
Since  $T>M^*_a(X^0,X^1)$, then \eqref{T_Me*} is equivalent to
$T > t_i^1(y_{k,i}^1)$ for all $i\in\{1,...,n\}$.
By contradiction, assume that there exists $j \in \{1,\dots,n\}$ such that $t^1(y^1_{k,j}) \geqslant T$.
Assume that $t^1(y^1_{k,j}) > T$, 
the case $t^1(y^1_{k,j}) = T$ being similar since $\omega$ is open. 
Then for any $t \in [0,T]$ it holds $\Phi_{-t}^v(y^1_{k,j}) \not \in \omega$. Thus, the localized control does not act on the trajectory, \textit{i.e.} for each $t\in[0,T]$ it holds $\Phi_{-t}^v(y^1_{k,j}) = \Phi_{-t}^{v+\mathds{1}_{\omega}u_k}(y^1_{k,j})$.

Since $y_{k,j}^1=\Phi_{T}^{v+\mathds{1}_{\omega}u_k}(x^0_j)=\Phi^v_T(x^0_j)$, then
$\Phi_{t}^{v}(x^0_j)\not\in\omega$
for all $t\in[0,T]$.
This is a contradiction with the fact that  $t^0_j\leqslant M_a^*(X^0,X^1) < T$. Thus~\eqref{T_Me*} holds.
Since $Y_k^1=X_k(T)$, then Proposition \ref{prop: dim finie} implies that 
\begin{equation}\label{ine Me Ma}
T>\widetilde{M}_e(X^0,Y^1_k).
\end{equation}
For each control $u_k$, denote by $\sigma_k$ the permutation for which it holds $y^1_{k,i}=\Phi_{T}^{v+\mathds{1}_{\omega}u_k}(x^0_{\sigma_k(i)})$. Up to extract a subsequence, for all $k$ large enough, $\sigma_k$ is equal to a permutation $\sigma$.
Inequality \eqref{eq:W1 1/k} implies that for all $i\in\{1,...,n\}$ it holds
\begin{equation}\label{y_k,i}
y_{k,i}^1\underset{k\rightarrow\infty}{\longrightarrow}x_{\sigma(i)}^1.
\end{equation}
Since $t^1(y_{k,i}^1)\leqslant \widetilde{M}_e(X^0,Y^1_k)<T$, up to a subsequence, for a $s_i\geqslant 0$, it holds
\begin{equation}\label{t_k,i}
t^1(y_{k,i}^1)\underset{k\rightarrow\infty}{\longrightarrow}s_{i}.
\end{equation}
Using \eqref{y_k,i}, \eqref{t_k,i}  and the continuity of the flow, it holds
$$\begin{array}{l}
|\Phi_{-t^1(y_{k,i}^1)}^v(y_{k,i}^1)-\Phi_{-s_i}^v(x_{\sigma(i)}^1)|
\underset{k\rightarrow\infty}{\longrightarrow}0.\end{array}$$
The fact that $\Phi_{-t^1(y_{k,i}^1)}^v(y_{k,i}^1)\in\overline{\omega}$ for each $i=1,\ldots,n$ leads to $\Phi_{-s_i}^v(x_{\sigma(i)}^1)\in\overline{\omega}$.
Thus
$\overline{t}^1(x_{\sigma(i)}^1)\leqslant \lim\limits_{k\rightarrow\infty} t^1(y^1_{k,i})$.
Denoting $\delta:=(T-\widetilde{M}_e(X^0,X^1))/2$,
using \eqref{ine Me Ma}, we obtain
\begin{equation*}\begin{array}{rcl}
\widetilde{M}_a(X^0,X^1)
&\leqslant&\max_{i\in\{1,...,n\}}|t^0_i+\overline{t}_{\sigma (i)}^1|\\
&\leqslant&\max_{i\in\{1,...,n\}}|t^0_i+t^1(y^1_{k,\sigma(i)})|+\delta\\
&=&\widetilde{M}_e(X^0,Y^1_k)+\delta<T.
\end{array}
\end{equation*}

We finally prove \textbf{Item (iii)} of Theorem~\ref{th:discret approx}. 
Let $T \in (0,M_a^*(X^0,X^1))$ be such that System~\eqref{eq ODE} is approximately controllable. 
For any $\varepsilon>0$, there exists $u_\varepsilon$ such that the associated trajectory to System~\eqref{eq ODE} satisfies
\begin{equation}\label{W1 Ma*}
\|X_{\varepsilon}(T)-X^1\| < \varepsilon.
\end{equation}
There exists $j \in \{1, \dots,n\}$ such that it holds 
$t^0(x^0_j) = M_a^*(X^0,X^1) > T$ or $\overline{t}^1(x^1_j) = M_a^*(X^0,X^1) > T$. 
Assume that $t^0(x^0_j) = M_a^*(X^0,X^1) > T$, the case $\overline{t}^1(x^1_j) = M_a^*(X^0,X^1)$ being similar. 
Define $x_{\varepsilon,j}(t):= \Phi_t^{v+\mathds{1}_{\omega}u_{\varepsilon}}(x_j^0)$. 
 Inequality \eqref{W1 Ma*} implies that it exists $k \in \{1, \dots, n\}$ such that
\begin{equation} \label{proche_cible}
| x_{\varepsilon,j}(T) - x^1_{k_{\varepsilon}} | < \varepsilon.
\end{equation}
As $t^0(x^0_j) > T$, the trajectory $ \Phi_t^{v}(x_j^0)$ does not cross the control set $\omega$ for $t\in[0,T)$, hence 
$$x_{\varepsilon,j}(T)= \Phi_T^{v+\mathds{1}_{\omega}u_{\varepsilon}}(x_j^0)=\Phi_T^v(x_j^0)$$ does not depend on $\varepsilon$.
Define $R:= \frac{1}{2} \min_{p,q} |x^1_p - x^1_q |$, that is strictly positive since $X^1$ is disjoint. 
For each $\epsilon<R$, estimate~\eqref{proche_cible} gives $k_{\varepsilon}=k$ independent on $\varepsilon$ and
$x_{\varepsilon,j}(T) =  \Phi_t^{v}(x_j^0)=x_k^1$.
Use now the proof of Item (iii) in Proposition \ref{prop: dim finie} to prove that the equation $\Phi_t^{v}(x_j^0)=x_k^1$ admits a finite number of solutions $(t,k)$ with $t\in[0,t^0(x^0_j)]$ and $k\in \{1, \dots,n\}$.
\hfill $\blacksquare$

\begin{remark}\label{rmq:T2*}
We  illustrate Item (iii) with two examples.
\begin{itemize}
\item[$\bullet$] Figure~\ref{fig:ex (0,T*)} (left). 
The vector field $v$ is $(1,0)$, thus uncontrolled trajectories are right translations. 
 The time $M_e^*(X^0,X^1)$ 
at which we can act on the particles and the minimal time $M_e(X^0,X^1)$ 
are respectively equal
 to $1$ and $2$.
We observe that System \eqref{eq ODE} is neither exactly controllable nor approximately controllable
on the whole interval $[0,M_e^*(X^0,X^1))$.
\item[$\bullet$] Figure~\ref{fig:ex (0,T*)} (right). 
The vector field $v$ is $(-y,x)$, thus uncontrolled trajectories are rotations with constant angular velocity. 
 The time $M_e^*(X^0,X^1)$ at which we can act on the particles and the minimal time $M_e(X^0,X^1)$ 
are respectively equal
to $3\pi/4$ and $\pi$.
We remark that System \eqref{eq ODE} is exactly controllable, then approximately controllable, 
at time $T=\pi/2\in [0,M_e^*(X^0,X^1))$.
\end{itemize}

\begin{figure}[ht]
\begin{center}
\begin{tikzpicture}[scale=0.75]
\fill[pattern=dots,opacity = 0.5] (-2,-1.5) -- (0,-1.5) -- (0,1.5) -- (-2,1.5) -- cycle;
\draw (-2,-1.5) -- (0,-1.5) -- (0,1.5) -- (-2,1.5) -- cycle;
\node[cross,thick,minimum size=4pt] at (-3,0) {};
\node[cross,thick,minimum size=4pt] at (1,0) {};
\path (-3,0.5) node {$x^0_1$};
\path (1,0.5) node {$x^1_1$};
\path (-1,-1) node {$\omega$};
\path (-2.5,-0.3) node {$v$};
\draw (-3,0) -- (1,0);
\draw (-2.4,0) -- (-2.5,0.1);
\draw (-2.4,0) -- (-2.5,-0.1);
\draw (0.5,1) -- (1.5,1);
\draw (0.5,1.1) -- (0.5,0.9);
\draw (1.5,1.1) -- (1.5,0.9);
\path (1,1.25) node {\scriptsize 1};
\end{tikzpicture}
\hspace{1cm}
\begin{tikzpicture}[scale=0.75]
\fill[pattern=dots,opacity = 0.5] (-2,-1.5) -- (0,-1.5) -- (0,1.5) -- (-2,1.5) -- cycle;
\draw (-2,-1.5) -- (0,-1.5) -- (0,1.5) -- (-2,1.5) -- cycle;
\node[cross,thick,minimum size=4pt] at (0.7071,0.7071) {};
\node[cross,thick,minimum size=4pt] at (0.7071,-0.7071) {};
\path (0.8,1.2) node {$x^1_1$};
\path (0.8,-1.2) node {$x^0_1$};
\path (-1,-1) node {$\omega$};
\path (1.3,0) node {$v$};
\draw (1,0) -- (0.9,-0.1);
\draw (1,0) -- (1.1,-0.1);
\draw (0,0) circle (1);
\end{tikzpicture}
\caption{Examples in the case $T\in(0,M_e^*(X^0,X^1))$.}\label{fig:ex (0,T*)}
\end{center}\end{figure}
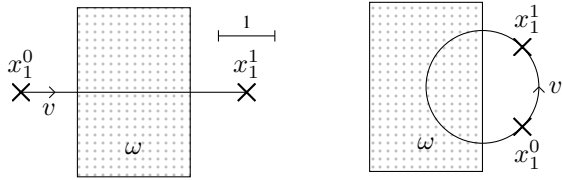

\end{remark}

\section{Algorithm and numerical simulations}\label{sec:num sim}

We consider a crowd described by a discrete configuration $X(t)$ whose evolution is given by System~\eqref{eq ODE}. 
We present the following algorithm to compute numerically the time and the control realizing the exact controllability 
between two configurations satisfying the Geometric Condition \ref{cond1}.

\begin{algorithm}[ht]
\caption{Minimal time problem for exact controllability}
\textbf{Step 1:} Computation of the minimal time  \eqref{OT disc CE}.\\
\textbf{Step 2:} Computation of an optimal permutation to steer $X^0$ to $X^1$ minimizing \eqref{eq:inf}.\\
\textbf{Step 3:} Computation of the control $u$ and the solution $X$ to System \eqref{eq ODE} on $(0,T)$.
\label{algo}
\end{algorithm}

The analysis and convergence of this method for continuous crowds will be studied in the forthcoming paper~\cite{DMR18}.

We now give a numerical example in dimension 2,
for which we solve the minimal time problem with Algorithm \ref{algo}.
Consider $v:=(1,0)$, the control region $\omega$ represented by the rectangle in Figure \ref{fig:simu2D}
and the initial and final configurations $X^0$, $X^1$ given in the first and fourth pictures of Figure \ref{fig:simu2D}.
We control the crowd at time $T=T_e(X^0,X^1)+\delta$, with $\delta=0.1$.

\begin{figure}[ht]\begin{center}
\hspace*{-6mm}\includegraphics[scale=0.5]{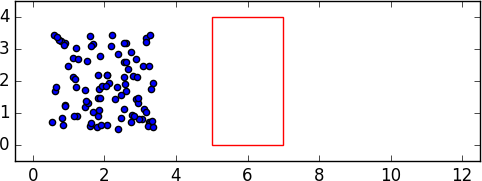}\vspace*{1mm}
\hspace*{-6mm}\includegraphics[scale=0.5]{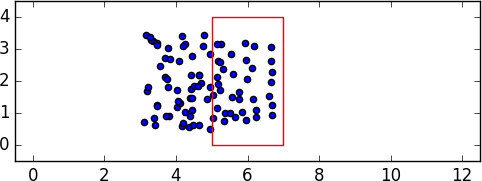}\vspace*{1mm}
\hspace*{-6mm}\includegraphics[scale=0.5]{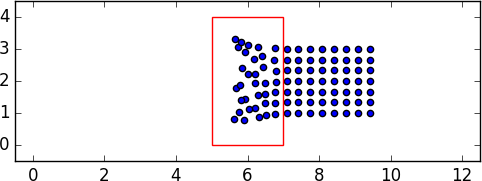}\vspace*{1mm}
\hspace*{-6mm}\includegraphics[scale=0.5]{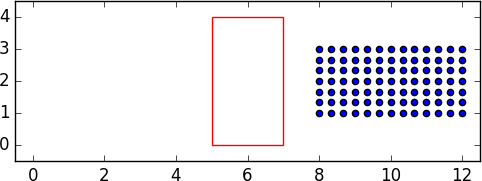}\end{center}
\caption{Solution at time $t=0$, $t=2.58$, $t=5.18$ and $t=T=7.75$.}
\label{fig:simu2D}
\end{figure}

\end{document}